\author{Liran Shaul}
\address{Universiteit Antwerpen, Departement Wiskunde-Informatica, Middelheim campus,
Middelheimlaan 1,
2020 Antwerp, Belgium}
\curraddr{Fakult\"at f\"ur Mathematik\\ 
Universit\"at Bielefeld\\ 
33501 Bielefeld\\ 
Germany.}
\email{LShaul@math.uni-bielefeld.de}
\newtheorem{thm}[equation]{Theorem}
\newtheorem{lem}[equation]{Lemma}
\theoremstyle{definition}
\newtheorem{rem}[equation]{Remark}
\newcommand{\iso}{\xrightarrow{\simeq}}
\newcommand{\inj}{\hookrightarrow}
\newcommand{\opn}{\operatorname}
\newcommand{\cat}[1]{\operatorname{\mathsf{#1}}}
\newcommand{\mfrak}[1]{\mathfrak{#1}}
\newcommand{\mrm}[1]{\mathrm{#1}}
\newcommand{\mbb}[1]{\mathbb{#1}}
\renewcommand{\k}{\Bbbk}
\newcommand{\K}{\mbb{K} \hspace{0.05em}}
\renewcommand{\a}{\mfrak{a}}
\numberwithin{equation}{section} 
\thanks{The author acknowledges the support of the European Union for the ERC grant No 257004-HHNcdMir.}
\thanks{{\em Mathematics Subject Classification} 2010:
13B35, 13C12, 13H15}
\begin{document}

\title[Adic reduction to the diagonal and cofiniteness]{Adic reduction to the diagonal and a relation between cofiniteness and derived completion}

\begin{abstract}
We prove two results about the derived functor of $\a$-adic completion:
(1) Let $\K$ be a commutative noetherian ring, let $A$ be a flat noetherian $\K$-algebra which is $\a$-adically complete with respect to some ideal $\a\subseteq A$, such that $A/\a$ is essentially of finite type over $\K$,  and let $M,N$ be finitely generated $A$-modules. Then adic reduction to the diagonal holds: $A\otimes^{\mrm{L}}_{  A\widehat{\otimes}_{\K} A } ( M\widehat{\otimes}^{\mrm{L}}_{\K} N ) \cong M \otimes^{\mrm{L}}_A N$. A similar result is given in the case where $M,N$ are not necessarily finitely generated.
(2) Let $A$ be a commutative ring, let $\a\subseteq A$ be a weakly proregular ideal, let $M$ be an $A$-module, and assume that the $\a$-adic completion of $A$ is noetherian (if $A$ is noetherian, all these conditions are always satisfied). Then $\opn{Ext}^i_A(A/\a,M)$ is finitely generated for all $i\ge 0$  if and only if the derived $\a$-adic completion $\mrm{L}\widehat{\Lambda}_{\a}(M)$ has finitely generated cohomologies over $\widehat{A}$. The first result is a far reaching generalization of a result of Serre, who proved this in case $\K$ is a field or a discrete valuation ring and $A = \K[[x_1,\dots,x_n]]$.  
\end{abstract}
\maketitle

\section{Introduction}

All rings in this paper are commutative and unital (but not necessarily noetherian). 
For such a ring $A$, we denote by $\cat{Mod} A$ the category of $A$-modules, 
by $\cat{C}(A)$ the category of complexes of $A$-modules,
and by $\cat{D}(A)$ its derived category.
We use cohomological indexing, with differentials being of degree $+1$.

\subsection{Adic completion functors and their derived functors}\label{sec:wpr-intro}

Let $A$ be a commutative ring, 
and let $\a\subseteq A$ be a finitely generated ideal.
The $\a$-adic completion functor associated to $A$ and $\a$ is given by
\[
\Lambda_{\a}(-) := \varprojlim A/\a^n \otimes_A -.
\]
This is an additive functor $\cat{Mod} A \to \cat{Mod} A$. 
We will sometimes denote $\Lambda_{\a}(M)$ by $\widehat{M}$.
The derived functor
\[
\mrm{L}\Lambda_{\a} : \cat{D}(A) \to \cat{D}(A)
\]
exists. 
In general, the functor $\mrm{L}\Lambda_{\a}$ can behave badly, 
but if $A$ is noetherian it is known to have good behavior and an explicit formula in terms of a set of generators of $\a$.
It was realized in recent years that one can relax the noetherian assumption, 
and instead let $A$ be arbitrary and assume that $\a$ is weakly proregular.
The weak proregularity condition, a property of certain finitely generated ideals in a commutative ring, 
which we recall in Section \ref{sec:prel} below,
is always satisfied when $A$ is noetherian.
It turns out that weak proregularity is the precise condition which guarantees good behavior of $\mrm{L}\Lambda_{\a}$. 
The $A$-module $\Lambda_{\a}(A)$ has the structure of a commutative ring which we denote by $\widehat{A}$.
There is a ring homomorphism $A \to \widehat{A}$.  
In case it is bijective $A$ is called an \textbf{adic ring}.
If $A$ is noetherian then this map is flat, 
but in general $\widehat{A}$ can fail to be flat over $A$ (even if $\a$ is weakly proregular). 
For any $A$-module $M$, the $A$-module $\Lambda_{\a}(M)$ has a structure of an $\widehat{A}$-module, 
and this gives rise to an additive functor $\cat{Mod} A \to \cat{Mod} \widehat{A}$.
We denote this functor by
\[
\widehat{\Lambda}_{\a} : \cat{Mod} A \to \cat{Mod} \widehat{A},
\]
and its derived functor by
\[
\mrm{L}\widehat{\Lambda}_{\a} : \cat{D}(A) \to \cat{D}(\widehat{A}).
\]

\subsection{Two questions in commutative algebra concerning derived completion}\label{sec:quest}

\subsubsection{Adic reduction to the diagonal}

In his seminal book \cite{Se}, Serre defined the intersection multiplicity $\chi(M,N)$ of a pair of finitely generated modules $M,N$ such that $M\otimes_A N$ has finite length, over a a noetherian regular local ring $(A,\mfrak{m})$.
Serre conjectured that the number $\chi(M,N)$ satisfies various natural properties, 
and was able to prove his conjectures in case $\widehat{A} := \Lambda_{\mfrak{m}}(A)$ is of the form $\widehat{A} = \K[[x_1,\dots,x_n]]$
where $\K$ is either a field or a discrete valuation ring. 
To prove these conjectures in this case, Serre used an adic version of reduction to the diagonal.
Recall that if $\K$ is a base commutative ring, and $A$ is a flat $\K$-algebra,
the reduction to the diagonal technique is based on the natural isomorphism
\[
A \otimes^{\mrm{L}}_{A\otimes_{\K} A} (M\otimes^{\mrm{L}}_{\K} N) \cong M\otimes^{\mrm{L}}_A N
\]
which holds for any $M, N \in \cat{D}(A)$. 
Serre essentially showed in \cite[Section V.B.2]{Se} that an adic version of this holds, 
namely that
\begin{equation}\label{eqn:artd}
A \otimes^{\mrm{L}}_{A\widehat{\otimes}_{\K} A} (\mrm{L}\widehat{\Lambda}_I(M\otimes^{\mrm{L}}_{\K} N)) \cong M\otimes^{\mrm{L}}_A N
\end{equation}
in the case where $A = \K[[x_1,\dots,x_n]]$ with $\K$ a field or a discrete valuation ring, 
$M, N$ are finitely generated $A$-modules, 
\[
I = (1\otimes_{\K} x_1,\dots,1\otimes_{\K} x_n,x_1\otimes_{\K} 1,\dots,x_n\otimes_{\K} 1) \subseteq A\otimes_{\K} A
\]
and 
\[
A\widehat{\otimes}_{\K} A := \Lambda_I(A\otimes_{\K} A) \cong \K[[x_1,\dots,x_n,y_1,\dots,y_n]].
\]
See \cite[Section 0.7.7]{EGA} for a discussion about completed tensor products of adic algebras.

Because of the usefulness of the reduction to the diagonal technique, 
and the fact that the completion operation can greatly simplify the structure of a noetherian ring, 
it is natural to ask: is it possible to extend (\ref{eqn:artd}) to a wider class of adic noetherian algebras?
A positive answer will be given in Theorem \ref{thm:ard} below.

\textbf{Notice} that even though we assume that $A$ is noetherian, 
for adic rings the enveloping algebra $A\otimes_{\K} A$ is usually non-noetherian, 
so the theory of weakly proregular ideals mentioned above is crucial for answering such a question.
Likewise, this forces us in Section \ref{section:MGMComp} below to work in a non-noetherian setting.

\subsubsection{Cofiniteness and derived completion}

Let $A$ be a commutative ring, 
let $\a\subseteq A$ be a finitely generated weakly proregular ideal,
and assume that $\widehat{A} := \Lambda_{\a}(A)$ is noetherian 
(if $A$ is noetherian, all these conditions are always satisfied).
The fact that $\widehat{A}$ is noetherian is equivalent to the ring $A/\a$ being noetherian.
The $\a$-torsion functor associated to $A$ and $\a$ is the functor
\[
\Gamma_{\a}(-) := \varinjlim \opn{Hom}_A(A/\a^n,-).
\]
It is a left exact additive functor $\cat{Mod} A \to \cat{Mod} A$, 
and its derived functor is the functor $\mrm{R}\Gamma_{\a}:\cat{D}(A)\to \cat{D}(A)$.
\par
An $A$-module $M$ is called \textbf{$\a$-cofinite} if $\Gamma_{\a} (M) = M$, 
and the $A/\a$-modules
\[
\opn{Ext}^i_A(A/\a,M)
\]
are finitely generated for all $i\ge 0$.
A great deal of study was done in an attempt to understand when $A$-modules (and especially cohomology with support modules) are $\a$-cofinite.
See for instance \cite{Me} and its references. Since cohomology with support modules are automatically torsion, 
the understanding of the $\a$-cofiniteness condition focuses on the finiteness of these $\opn{Ext}$-modules.
A calculation (that is repeated in the proof of Theorem \ref{thm:cofinite} below) shows that if $\mrm{L}\widehat{\Lambda}_{\a}(M)$ has finitely generated cohomologies over $\widehat{A}$ then $\opn{Ext}^i_A(A/\a,M)$ is finitely generated for all $i\ge 0$.
It is thus natural to ask: does the converse hold? We will show in Theorem \ref{thm:cofinite} that these conditions are equivalent.

\subsection{The MGM equivalence}

We continue to assume that $A$ is a commutative ring, 
and $\a\subseteq A$ is a finitely generated weakly proregular ideal, 
and remind the reader again that in a noetherian ring every ideal is weakly proregular.
The two questions from Section \ref{sec:quest} involve the derived completion functor.
It is shown in \cite{AJL1} that for any $M \in \cat{D}(A)$, 
there is a natural map 
\begin{equation}\label{eqn:complete-map}
M \to \mrm{L}\Lambda_{\a}(M).
\end{equation} 
A complex $M$ is called \textbf{cohomologically $\a$-adically complete} if (\ref{eqn:complete-map}) is an isomorphism in $\cat{D}(A)$.
The collection of all cohomologically $\a$-adically complete complexes is full triangulated subcategory of $\cat{D}(A)$, 
denoted by $\cat{D}(A)_{\a-\opn{com}}$. 
Thus, we see that questions about (cohomologically) complete complexes are really questions about objects in $\cat{D}(A)_{\a-\opn{com}}$. 
Unfortunately, we do not know how to answer the two questions above while working in $\cat{D}(A)_{\a-\opn{com}}$.
Luckily, the Matlis-Greenlees-May (MGM) equivalence provides us with an alternative.
\par
Similarly to (\ref{eqn:complete-map}), 
for any $M \in \cat{D}(A)$ there is a natural map
\begin{equation}\label{eqn:tor-map}
\mrm{R}\Gamma_{\a}(M) \to M,
\end{equation}
and $M$ is called \textbf{cohomologically $\a$-torsion} if (\ref{eqn:tor-map}) is an isomorphism in $\cat{D}(A)$.
The collection of all cohomologically $\a$-torsion complexes is a full triangulated subcategory of $\cat{D}(A)$, 
and we denote it by $\cat{D}(A)_{\a-\opn{tor}}$. 
The MGM equivalence states that the categories $\cat{D}(A)_{\a-\opn{tor}}$ and $\cat{D}(A)_{\a-\opn{com}}$ are equivalent (see \cite[Theorem 7.11]{PSY1} for a proof, and \cite[Remark 7.14]{PSY1} for a historical survey). 
Using it, we may transfer questions in $\cat{D}(A)_{\a-\opn{com}}$ to questions in $\cat{D}(A)_{\a-\opn{tor}}$.
This is almost sufficient for the purpose of solving the above questions. 

However, as both of the questions from Section \ref{sec:quest} involve passage from the ring $A$ to its completion $\widehat{A}$, 
it turns out that the MGM equivalence is not quite sufficient for the purpose of answering them.
In section \ref{section:MGMComp} below we investigate relations between the MGM equivalence 
and the functors $\mrm{R}\widehat{\Gamma}_{\a}, \mrm{L}\widehat{\Lambda}_{\a}$. These are summarized in Remark \ref{rem:MGM} below.
Using these new results, in Section \ref{sec:ARD} we answer the first question above, establishing a general adic reduction to the diagonal natural isomorphism. Finally, in Section \ref{sec:COF} we answer the second question from above, showing that $\opn{Ext}^i_A(A/\a,M)$ is finitely generated if and only if the cohomologies of the derived completion of $M$ are finitely generated over $\widehat{A}$.

\section{Preliminaries}\label{sec:prel}

\subsection{Resolutions of unbounded complexes}

We begin by recalling some basic facts about resolutions of unbounded complexes. 
A reference for this is \cite{Sp}. Let $A$ be a commutative ring.
A complex $M \in \cat{C}(A)$ is called \textbf{K-projective} (respectively \textbf{K-injective})
if for any acyclic complex $X \in \cat{C}(A)$, the complex $\opn{Hom}_A(M,X)$ (resp. $\opn{Hom}_A(X,M)$) is acyclic.
A complex $M \in \cat{C}(A)$ is called \textbf{K-flat} if for any acyclic complex $X \in \cat{C}(A)$, 
the complex $M\otimes_A X$ is acyclic. 
By \cite[Theorem C]{Sp}, every complex has a K-projective resolution and a K-injective resolution.
A K-projective complex is K-flat, so in particular, every complex has a K-flat resolution.
By \cite[Section 1]{AJL1}, the functor $\mrm{L}\Lambda_{\a}$ can be calculated using K-flat resolutions.

\subsection{Weak proregularity}

Let $A$ be a commutative ring, and let $a \in A$.
Following \cite[Section 4]{PSY1}, 
the \textbf{infinite dual Koszul complex} associated to $A$ and $a$ is the complex
\[
0 \to A \xrightarrow{d} A[a^{-1}] \to 0
\]
concentrated in degrees $0,1$, where $d$ is the localization map.
We denote this complex by $\opn{K}^{\vee}_{\infty}(A; (a))$.
If $\mathbf{a} = (a_1,\dots,a_n)$ is a finite sequence of elements of $A$, 
we define the infinite dual Koszul complex associated to $A$ and $\mathbf{a}$ to be the complex
\[
\opn{K}^{\vee}_{\infty}(A; \mathbf{a}) := \opn{K}^{\vee}_{\infty}(A; (a_1))\otimes_A \opn{K}^{\vee}_{\infty}(A; (a_2)) \otimes_A \dots \otimes_A \opn{K}^{\vee}_{\infty}(A; (a_n)).
\]
It is a bounded complex of flat $A$-modules, so it is K-flat.
The \textbf{telescope complex} is an explicit free resolution of the infinite dual Koszul complex which we now describe.
Given $a \in A$, we let $\opn{Tel}(A;(a))$ be the complex
\[
0 \to \bigoplus_{n=0}^{\infty} A \xrightarrow{d} \bigoplus_{n=0}^{\infty} A \to 0
\]
concentrated in degrees $0,1$. 
The differential $d$ acts as follows: let $\{\delta_i|i\ge 0\}$ be the standard basis of the countably generated free $A$-module 
$\bigoplus_{n=0}^{\infty} A$. Then $d(\delta_0) = \delta_0$, and $d(\delta_i) = \delta_{i-1}-a\cdot \delta_i$ for $i\ge 1$.
Again, for a finite sequence of elements $\mathbf{a} = (a_1,\dots,a_n)$ of $A$, we let
\[
\opn{Tel}(A;\mathbf{a}) := \opn{Tel}(A;(a_1)) \otimes_A \opn{Tel}(A;(a_2)) \otimes_A \dots \otimes_A \opn{Tel}(A;(a_n)).
\]
This is a bounded complex of infinitely generated free $A$-modules, so it is K-projective.
According to \cite[Lemma 5.7]{PSY1}, there is a quasi-isomorphism
$\opn{Tel}(A;\mathbf{a}) \to \opn{K}^{\vee}_{\infty}(A; \mathbf{a})$.

Let $A,B$ be a commutative rings, $\mathbf{a}$ a finite sequence of elements of $A$,
let $f:A \to B$ be a ring homomorphism, and let $\mathbf{b} = f(\mathbf{a})$.
Then as explained in \cite{PSY1}, the infinite dual Koszul complex and the telescope complex
satisfy the base change property: there are isomorphisms of complexes of $B$-modules
\[
\opn{K}^{\vee}_{\infty}(A; \mathbf{a}) \otimes_A B \cong \opn{K}^{\vee}_{\infty}(B; \mathbf{b}), \quad
\opn{Tel}(A;\mathbf{a}) \otimes_A B \cong \opn{Tel}(B;\mathbf{b}).
\]

Let $A$ be a commutative ring,
let $\a$ be a finitely generated ideal,
and let $\mathbf{a}$ be a finite sequence of elements of $A$ that generates $\a$.
The sequence $\mathbf{a}$ is called \textbf{weakly proregular} if there is an isomorphism
\[
\mrm{R}\Gamma_{\a}(-) \cong \opn{K}^{\vee}_{\infty}(A; \mathbf{a})\otimes_A -
\]
of functors $\cat{D}(A) \to \cat{D}(A)$.
Moreover, in this case, by \cite[Corollary 5.25]{PSY1},
there is also an isomorphism
\[
\mrm{L}\Lambda_{\a}(-) \cong \opn{Hom}_A(\opn{Tel}(A;\mathbf{a}),-)
\]
of functors $\cat{D}(A) \to \cat{D}(A)$.

It turns out that this property is independent of the chosen generating set of the ideal $\a$.
Hence, the ideal $\a$ will be called weakly proregular if some (equivalently, any)
finite sequence of elements of $A$ that generates it is weakly proregular.
By \cite[Theorem 4.34]{PSY1}, in a noetherian ring every ideal is weakly proregular.
See \cite[Discussion after Theorem 2.3]{HM} for an example of a finitely generated ideal that is not weakly proregular.
See \cite{AJL1,PSY1,Sc} for more information about weak proregularity.

\section{Relations between derived torsion and derived completion}\label{section:MGMComp}

Given a commutative ring $A$ and a finitely generated ideal $\a\subseteq A$, 
the $A$-module $\Gamma_{\a}(M)$ has naturally a structure of an $\widehat{A}:=\Lambda_{\a}(A)$-module.
We obtain a functor $\cat{Mod} A \to \cat{Mod} \widehat{A}$ which we denote by $\widehat{\Gamma}_{\a}$, 
and a derived functor
\[
\mrm{R}\widehat{\Gamma}_{\a}:\cat{D}(A) \to \cat{D}(\widehat{A}).
\]

The next lemma is essentially given in \cite[Corollary 0.3.1]{AJL1} in case the given ideal is proregular.
We shall need it in the more general weakly proregular case, so we give this generalization.

\begin{lem}\label{lem:tensor-completion-with-koszul}
Let $A$ be a commutative ring, 
let $\a\subseteq A$ be a finitely generated weakly proregular ideal, 
let $\mathbf{a}$ be a finite sequence of elements of $A$ that generates $\a$, 
and let $P$ be a K-flat complex of $A$-modules.
Then the map
\[
\opn{K}^{\vee}_{\infty}(A; \mathbf{a}) \otimes_A P \to \opn{K}^{\vee}_{\infty}(A; \mathbf{a}) \otimes_A \widehat{P}
\]
obtained by tensoring the canonical map $P \to \widehat{P}$ with $\opn{K}^{\vee}_{\infty}(A; \mathbf{a})$ is a quasi-isomorphism.
\end{lem}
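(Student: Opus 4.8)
The plan is to reduce the claim to the case of a single element $a \in A$ via the tensor-product structure of the infinite dual Koszul complex, and then to handle the single-element case by an explicit computation with the localization map. First I would observe that since $\opn{K}^{\vee}_{\infty}(A;\mathbf{a})$ is a bounded complex of flat $A$-modules, tensoring the map $P \to \widehat{P}$ with it commutes with the factorization $\opn{K}^{\vee}_{\infty}(A;\mathbf{a}) = \opn{K}^{\vee}_{\infty}(A;(a_1)) \otimes_A \cdots \otimes_A \opn{K}^{\vee}_{\infty}(A;(a_n))$; so it suffices to treat one factor at a time, i.e.\ to show that for a single $a \in A$ the map $\opn{K}^{\vee}_{\infty}(A;(a)) \otimes_A P \to \opn{K}^{\vee}_{\infty}(A;(a)) \otimes_A \widehat{P}$ is a quasi-isomorphism, where $\widehat{P}$ now denotes completion with respect to the full ideal $\a$ (not the principal ideal $(a)$). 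The point is that $\opn{K}^{\vee}_{\infty}(A;(a)) \otimes_A -$ only sees the part of a module supported on $V(a)$, while completion at $\a$ only changes things on $V(\a) \subseteq V(a)$, so the two operations should be ``transparent'' to each other.

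Next I would make this precise. The cone of $\opn{K}^{\vee}_{\infty}(A;(a)) \otimes_A P \to \opn{K}^{\vee}_{\infty}(A;(a)) \otimes_A \widehat{P}$ is $\opn{K}^{\vee}_{\infty}(A;(a)) \otimes_A \opn{cone}(P \to \widehat{P})$, so it is equivalent to show that $\opn{K}^{\vee}_{\infty}(A;(a)) \otimes_A C$ is acyclic, where $C = \opn{cone}(P \to \widehat{P})$. Since $P$ is K-flat, $C$ computes $\opn{cone}(P \to \mrm{L}\Lambda_{\a}(P))$ in $\cat{D}(A)$, and by \cite{AJL1} this cone lies in $\cat{D}(A)_{\a-\opn{tor}}$ — more precisely, applying $\opn{K}^{\vee}_{\infty}(A;\mathbf{a}) \otimes_A -$ (which computes $\mrm{R}\Gamma_{\a}$ by weak proregularity) to the triangle $P \to \widehat{P} \to C \to$ and using that $\mrm{R}\Gamma_{\a}(P) \iso \mrm{R}\Gamma_{\a}(\mrm{L}\Lambda_{\a}(P))$ (a standard consequence of the identities $\mrm{R}\Gamma_{\a}\mrm{L}\Lambda_{\a} \cong \mrm{R}\Gamma_{\a}$, again from \cite{AJL1}), one gets $\opn{K}^{\vee}_{\infty}(A;\mathbf{a}) \otimes_A C \simeq 0$. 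It then remains to deduce from $\opn{K}^{\vee}_{\infty}(A;\mathbf{a}) \otimes_A C \simeq 0$ that already $\opn{K}^{\vee}_{\infty}(A;(a)) \otimes_A C \simeq 0$; this follows because $\opn{K}^{\vee}_{\infty}(A;\mathbf{a}) = \opn{K}^{\vee}_{\infty}(A;(a)) \otimes_A \opn{K}^{\vee}_{\infty}(A;a_2,\dots) \otimes_A \cdots$ and, more to the point, because each cohomology module $\mrm{H}^j(C)$ is an $\a$-torsion $A$-module — being a subquotient of the $\a$-complete-torsion cone — hence is already $(a)$-local in the sense that $\opn{K}^{\vee}_{\infty}(A;(a)) \otimes_A \mrm{H}^j(C) = \mrm{R}\Gamma_{(a^\infty)}$-type object is controlled; a cleaner route is to note $C$ is a complex of $\a$-torsion modules up to quasi-isomorphism, so $\opn{K}^{\vee}_{\infty}(A;(a_2,\dots,a_n)) \otimes_A C \iso C$ already, whence $\opn{K}^{\vee}_{\infty}(A;\mathbf{a}) \otimes_A C \iso \opn{K}^{\vee}_{\infty}(A;(a)) \otimes_A C$.

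The main obstacle, I expect, is the last reduction step — passing from acyclicity after tensoring with the \emph{full} dual Koszul complex to acyclicity after tensoring with the single-element factor. One must be careful that $C$ is not literally a complex of $\a$-torsion modules but only quasi-isomorphic to one, so the manipulation $\opn{K}^{\vee}_{\infty}(A;(a_2,\dots,a_n)) \otimes_A C \iso C$ needs the weak proregularity of the subsequence $(a_2,\dots,a_n)$ together with the fact that $\mrm{R}\Gamma_{\a}(C) \iso C$ in $\cat{D}(A)$ — and one has to check that $\a$-torsion-ness of $C$ is genuinely inherited (e.g.\ via the triangle defining $C$ and the already-established $\opn{K}^{\vee}_{\infty}(A;\mathbf{a}) \otimes_A C \simeq 0$, which after all says $\mrm{R}\Gamma_{\a}(C) \simeq 0$, giving acyclicity directly once we know $C \in \cat{D}(A)_{\a-\opn{tor}}$). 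An alternative that sidesteps this is to argue degreewise: reduce to $P$ a single flat module, write $\widehat{P} = \varprojlim P/\a^n P$, and compute $\opn{K}^{\vee}_{\infty}(A;(a)) \otimes_A \widehat{P}$ directly, using that $a$-localization commutes with the relevant limit because $\opn{K}^{\vee}_{\infty}(A;(a))$ is a two-term complex of flat modules and $P/\a^n P$ is $\a$-torsion hence $a$-power torsion — but the homological-algebra route via the triangle and \cite{AJL1} is shorter and is the one I would write up.
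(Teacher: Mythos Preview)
Your second paragraph already contains a complete and correct proof, but you have buried it under an unnecessary and in fact false reduction. Once you establish $\opn{K}^{\vee}_{\infty}(A;\mathbf{a}) \otimes_A C \simeq 0$ via the MGM identity $\mrm{R}\Gamma_{\a}\mrm{L}\Lambda_{\a} \cong \mrm{R}\Gamma_{\a}$ applied to the canonical map $P \to \mrm{L}\Lambda_{\a}(P)$, you are \emph{done}: that is literally the statement of the lemma, since the cone of the displayed map is $\opn{K}^{\vee}_{\infty}(A;\mathbf{a}) \otimes_A C$. There is nothing further to show, and no reason to pass to a single generator. This is also essentially the paper's argument: the paper replaces $\opn{K}^{\vee}_{\infty}(A;\mathbf{a})$ by the quasi-isomorphic telescope complex and factors the map through $\opn{Tel}(A;\mathbf{a}) \otimes_A \opn{Hom}_A(\opn{Tel}(A;\mathbf{a}),P)$, invoking \cite[Lemma~7.6 and Corollary~5.25]{PSY1}; unpacked, this is exactly the same $\mrm{R}\Gamma_{\a}\mrm{L}\Lambda_{\a} \cong \mrm{R}\Gamma_{\a}$ identity. (For that identity under the weak proregularity hypothesis you should cite \cite{PSY1} rather than \cite{AJL1}; as the paper notes, \cite{AJL1} treats only the stronger proregular case.)

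The single-element reduction in your first paragraph is not merely superfluous but actually wrong: the single-element statement is false in general. Take $A = k[x,y]$, $\a = (x,y)$, $a = x$, and $P = A$, so that $\widehat{P} = k[[x,y]]$. Then $H^1\bigl(\opn{K}^{\vee}_{\infty}(A;(x)) \otimes_A P\bigr) = A[x^{-1}]/A$ and $H^1\bigl(\opn{K}^{\vee}_{\infty}(A;(x)) \otimes_A \widehat{P}\bigr) = \widehat{A}[x^{-1}]/\widehat{A}$, and the induced map is not surjective: the class of $x^{-1}\sum_{j\ge 0} y^j$ lies in the target but not in the image, since modulo $x$ one would need $\sum_{j\ge 0} y^j$ to agree with a polynomial in $y$. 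Thus your ``main obstacle'' in the third paragraph is an obstacle to a false statement. Delete the first and third paragraphs and keep only the cone argument from the second, stopping at $\opn{K}^{\vee}_{\infty}(A;\mathbf{a}) \otimes_A C \simeq 0$.
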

\begin{proof}
Since there is a quasi-isomorphism 
\[
\opn{Tel}(A;\mathbf{a}) \to \opn{K}^{\vee}_{\infty}(A; \mathbf{a}),
\]
and since $\opn{Tel}(A;\mathbf{a})$ and $\opn{K}^{\vee}_{\infty}(A; \mathbf{a})$ are both K-flat over $A$,
it is enough to show that the map
\[
\opn{Tel}(A;\mathbf{a}) \otimes_A P \to \opn{Tel}(A;\mathbf{a}) \otimes_A \widehat{P}
\]
is a quasi-isomorphism.
By \cite[Corollary 5.25]{PSY1}, 
and since $\widehat{P} = \mrm{L}\Lambda_{\a}(P)$,
this map fits into a commutative diagram
\[
\begin{tikzcd}[column sep=huge, row sep=huge]
\opn{Tel}(A;\mathbf{a}) \otimes_A P 
\arrow{d}\arrow{rd} & \\
\opn{Tel}(A;\mathbf{a}) \otimes_A \opn{Hom}_A(\opn{Tel}(A;\mathbf{a}), P)
\arrow{r} &
\opn{Tel}(A;\mathbf{a}) \otimes_A \widehat{P}
\end{tikzcd}
\]
in which the horizontal map is a quasi-isomorphism.
Moreover, by \cite[Lemma 7.6]{PSY1},
the vertical map is also a quasi-isomorphism, 
so we obtain the required result.
\end{proof}

Given a commutative ring $A$, a finitely generated ideal $\a\subseteq A$, and a complex of $A$-modules $M$, note that there is a natural 
$\widehat{A}$-linear map
\begin{equation}\label{eqn:psi-comp}
\psi:M\otimes_A \widehat{A} \to \widehat{\Lambda}_{\a}(M)
\end{equation}
given by
\[
m\otimes_A (\sum_{n=0}^{\infty} a_n) \mapsto \sum_{n=0}^{\infty} a_n\cdot m,
\]
where $a_n \in \a^n\cdot \widehat{A}$.
Applying the forgetful functor $\opn{Rest}_{\widehat{A}/A}:\cat{C}(\widehat{A}) \to \cat{C}(A)$ to $\psi$,
the completion map $M \to \Lambda_{\a}(M)$ factors as
\begin{equation}\label{eqn:composition-is-comp}
M \to M\otimes_A \widehat{A} \xrightarrow{\opn{Rest}_{\widehat{A}/A}(\psi)} \Lambda_{\a} (M),
\end{equation}
where the map $M \to M\otimes_A \widehat{A}$ is induced by the completion map $A \to \widehat{A}$.

\begin{thm}\label{thm:RGammaLLambda}
Let $A$ be a commutative ring, let $\a\subseteq A$ be a finitely generated weakly proregular ideal, set $\widehat{A}:= \Lambda_{\a}(A)$, and assume that $\widehat{\a} := \widehat{A}\cdot \a$ is also weakly proregular. Then for any $M \in \cat{D}(A)$, there is a natural isomorphism
\[
\mrm{R}\Gamma_{\widehat{\a}} ( \mrm{L}\widehat{\Lambda}_{\a} (M) ) \cong \mrm{R}\widehat{\Gamma}_{\a}(M)
\]
in $\cat{D}(\widehat{A})$.
\end{thm}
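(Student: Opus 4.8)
The plan is to compute both sides using the explicit telescope and infinite dual Koszul models, and to reduce everything to Lemma~\ref{lem:tensor-completion-with-koszul}. Fix a finite sequence $\mathbf{a} = (a_1,\dots,a_n)$ in $A$ generating $\a$, and let $\mathbf{\widehat{a}}$ denote its image in $\widehat{A}$; this generates $\widehat{\a}$, and by hypothesis it is weakly proregular. First I would replace $M$ by a K-flat resolution $P$ over $A$ (which is then also K-flat over $A$ in the obvious sense), so that $\widehat{\Lambda}_{\a}(M)$ is represented by $\widehat{P} = \Lambda_{\a}(P)$ as a complex of $\widehat{A}$-modules, and $\mrm{R}\widehat{\Gamma}_{\a}(M)$ is represented by $\opn{K}^{\vee}_{\infty}(A;\mathbf{a}) \otimes_A P$ with its natural $\widehat{A}$-module structure coming from $\widehat{\Gamma}_{\a}$. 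On the other side, since $\mathbf{\widehat{a}}$ is weakly proregular over $\widehat{A}$, we have $\mrm{R}\Gamma_{\widehat{\a}}(\mrm{L}\widehat{\Lambda}_{\a}(M)) \cong \opn{K}^{\vee}_{\infty}(\widehat{A};\mathbf{\widehat{a}}) \otimes_{\widehat{A}} \widehat{P}$. Now apply the base change isomorphism $\opn{K}^{\vee}_{\infty}(A;\mathbf{a}) \otimes_A \widehat{A} \cong \opn{K}^{\vee}_{\infty}(\widehat{A};\mathbf{\widehat{a}})$, giving
\[
\opn{K}^{\vee}_{\infty}(\widehat{A};\mathbf{\widehat{a}}) \otimes_{\widehat{A}} \widehat{P} \cong \opn{K}^{\vee}_{\infty}(A;\mathbf{a}) \otimes_A \widehat{A} \otimes_{\widehat{A}} \widehat{P} \cong \opn{K}^{\vee}_{\infty}(A;\mathbf{a}) \otimes_A \widehat{P},
\]
so the left-hand side is represented by $\opn{K}^{\vee}_{\infty}(A;\mathbf{a}) \otimes_A \widehat{P}$, now viewed over $\widehat{A}$.

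So the whole statement reduces to: the canonical map $\opn{K}^{\vee}_{\infty}(A;\mathbf{a}) \otimes_A P \to \opn{K}^{\vee}_{\infty}(A;\mathbf{a}) \otimes_A \widehat{P}$ obtained from $P \to \widehat{P}$ is an isomorphism in $\cat{D}(\widehat{A})$. As a map of complexes of $A$-modules this is exactly the quasi-isomorphism of Lemma~\ref{lem:tensor-completion-with-koszul}. The only remaining point is to check that this quasi-isomorphism is in fact $\widehat{A}$-linear, i.e.\ that it lifts to a morphism in $\cat{C}(\widehat{A})$ and hence in $\cat{D}(\widehat{A})$, and that under it the two a priori different $\widehat{A}$-structures on the two sides — the one on $\opn{K}^{\vee}_{\infty}(A;\mathbf{a}) \otimes_A P$ coming from $\widehat{\Gamma}_{\a}$ applied to a torsion complex, and the one on $\opn{K}^{\vee}_{\infty}(A;\mathbf{a}) \otimes_A \widehat{P}$ coming from the $\widehat{A}$-module structure of $\widehat{P}$ — are compatible. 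For this I would use the factorization \eqref{eqn:composition-is-comp} of the completion map through $P \otimes_A \widehat{A}$ together with the base-change description of $\opn{K}^{\vee}_{\infty}$, tracing the $\widehat{A}$-action through $\opn{K}^{\vee}_{\infty}(A;\mathbf{a}) \otimes_A \widehat{A} \otimes_{\widehat{A}} (-)$; on an $\a$-torsion module the natural $\widehat{A}$-action agrees with the action factoring through $A \to \widehat{A}$ after tensoring with $\opn{K}^{\vee}_{\infty}$, which is the needed coherence.

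Finally I would address naturality in $M$: both functors $\mrm{R}\Gamma_{\widehat{\a}}\circ\mrm{L}\widehat{\Lambda}_{\a}$ and $\mrm{R}\widehat{\Gamma}_{\a}$ are triangulated functors $\cat{D}(A) \to \cat{D}(\widehat{A})$, and the isomorphism just constructed is induced, on K-flat representatives, by the functorial map $P \to \widehat{P}$ tensored with the fixed complex $\opn{K}^{\vee}_{\infty}(A;\mathbf{a})$, so naturality is automatic once one checks independence of the choice of K-flat resolution and of the generating sequence $\mathbf{a}$ (the latter by weak proregularity being independent of generators, as recalled in Section~\ref{sec:prel}). The main obstacle I anticipate is not the quasi-isomorphism itself, which is handed to us by Lemma~\ref{lem:tensor-completion-with-koszul}, but the bookkeeping of $\widehat{A}$-linearity: one must be careful that $\opn{K}^{\vee}_{\infty}(A;\mathbf{a}) \otimes_A \widehat{P}$ carries an honest $\widehat{A}$-structure making the map $\widehat{A}$-linear, and that this matches $\opn{K}^{\vee}_{\infty}(\widehat{A};\mathbf{\widehat{a}}) \otimes_{\widehat{A}} \widehat{P}$ under base change — this is where the hypothesis that $\widehat{\a}$ is itself weakly proregular is genuinely used, since it is what lets us represent $\mrm{R}\Gamma_{\widehat{\a}}$ by the Koszul complex over $\widehat{A}$ in the first place.
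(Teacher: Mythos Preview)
Your overall strategy coincides with the paper's: take a K-flat resolution $P$, model both sides via $\opn{K}^{\vee}_{\infty}$ and base change, and reduce to Lemma~\ref{lem:tensor-completion-with-koszul}. The one genuine gap is your representation of $\mrm{R}\widehat{\Gamma}_{\a}(M)$. You assert it is computed by $\opn{K}^{\vee}_{\infty}(A;\mathbf{a})\otimes_A P$ ``with its natural $\widehat{A}$-module structure coming from $\widehat{\Gamma}_{\a}$'', but that complex carries no such structure at the chain level: its terms involve localizations like $A[a_i^{-1}]\otimes_A P^j$, which are not $\a$-torsion, so $\widehat{A}$ does not act. The $\widehat{A}$-action on $\widehat{\Gamma}_{\a}$ is only visible on a K-injective model, and then you have no direct chain map to $\opn{K}^{\vee}_{\infty}(A;\mathbf{a})\otimes_A\widehat{P}$ to work with.

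The paper fixes exactly this, and in the way you yourself sketch as the anticipated remedy. It invokes \cite[Theorem~3.2]{Sh2} to identify $\mrm{R}\widehat{\Gamma}_{\a}(M)\cong \opn{K}^{\vee}_{\infty}(A;\mathbf{a})\otimes_A P\otimes_A\widehat{A}$ in $\cat{D}(\widehat{A})$, where the $\widehat{A}$-structure is now manifest from the tensor factor. The comparison map then becomes the genuinely $\widehat{A}$-linear $\phi=1\otimes\psi$ with $\psi$ as in \eqref{eqn:psi-comp}, and one shows $\phi$ is a quasi-isomorphism after applying $\opn{Rest}_{\widehat{A}/A}$ by placing it in a commutative triangle under $\opn{K}^{\vee}_{\infty}(A;\mathbf{a})\otimes_A P$ and invoking Lemma~\ref{lem:tensor-completion-with-koszul} twice (once for $A$, once for $P$), exactly via the factorization \eqref{eqn:composition-is-comp}. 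So your ``factorization through $P\otimes_A\widehat{A}$'' is not a patch on the argument but \emph{is} the argument; once you replace your model of $\mrm{R}\widehat{\Gamma}_{\a}(M)$ by $\opn{K}^{\vee}_{\infty}(A;\mathbf{a})\otimes_A P\otimes_A\widehat{A}$, your proof becomes the paper's.
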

\begin{proof}
Let $P \to M$ be a K-flat resolution over $A$, let $\mathbf{a}$ be a finite sequence that generates $\a$, and let $\widehat{\mathbf{a}}$ be its image in $\widehat{A}$. 
On the one hand, since $\a$ is weakly proregular, 
by \cite[Theorem 3.2]{Sh2}, there are natural isomorphisms
\[
\mrm{R}\widehat{\Gamma}_{\a}(M) \cong \mrm{R}\widehat{\Gamma}_{\a}(P) \cong (\opn{K}^{\vee}_{\infty}(A; \mathbf{a}) \otimes_A P) \otimes^{\mrm{L}}_A \widehat{A}.
\]
Since $\opn{K}^{\vee}_{\infty}(A; \mathbf{a})$ is K-flat over $A$, 
the complex $\opn{K}^{\vee}_{\infty}(A; \mathbf{a}) \otimes_A P$ is also K-flat. 
Hence, we may replace derived tensor product by ordinary tensor product, 
so there are natural isomorphisms
\[
\mrm{R}\widehat{\Gamma}_{\a}(M) \cong \mrm{R}\widehat{\Gamma}_{\a}(P) \cong \opn{K}^{\vee}_{\infty}(A; \mathbf{a}) \otimes_A P \otimes_A \widehat{A}.
\]

On the other hand, by \cite[Corollary 4.26]{PSY1}, the base change property of the infinite dual Koszul complex, 
and the fact that $\widehat{\a}$ is weakly proregular, 
there are natural isomorphisms
\[
\mrm{R}\Gamma_{\widehat{\a}} ( \mrm{L}\widehat{\Lambda}_{\a} (M) ) \cong 
\mrm{R}\Gamma_{\widehat{\a}} ( \widehat{\Lambda}_{\a} (P) ) \cong \opn{K}^{\vee}_{\infty}(\widehat{A}; \widehat{\mathbf{a}}) \otimes_{\widehat{A}} \widehat{\Lambda}_{\a} (P) \cong \opn{K}^{\vee}_{\infty}(A; \mathbf{a}) \otimes_A \widehat{\Lambda}_{\a} (P).
\]
Hence, it is enough to show that there is a natural $\widehat{A}$-linear quasi-isomorphism
\[
 \opn{K}^{\vee}_{\infty}(A; \mathbf{a}) \otimes_A P \otimes_A \widehat{A} \to \opn{K}^{\vee}_{\infty}(A; \mathbf{a}) \otimes_A \widehat{\Lambda}_{\a} (P).
\]
By (\ref{eqn:psi-comp}), there is an $\widehat{A}$-linear map $\psi:P\otimes_A \widehat{A} \to \widehat{\Lambda}_{\a} (P)$,
so we obtain an induced map
\begin{equation}\label{eqn:for-rl}
\phi = (1_{\opn{K}^{\vee}_{\infty}(A; \mathbf{a})}\otimes_A \psi): \opn{K}^{\vee}_{\infty}(A; \mathbf{a}) \otimes_A P \otimes_A \widehat{A} \to \opn{K}^{\vee}_{\infty}(A; \mathbf{a}) \otimes_A \widehat{\Lambda}_{\a} (P).
\end{equation}
We will show that (\ref{eqn:for-rl}) is a quasi-isomorphism. Let $\opn{Rest}_{\widehat{A}/A}$ be the forgetful functor
\[
\opn{Rest}_{\widehat{A}/A}: \cat{D}(\widehat{A}) \to \cat{D}(A).
\]
It is enough to show that
\[
\opn{Rest}_{\widehat{A}/A}(\phi): \opn{K}^{\vee}_{\infty}(A; \mathbf{a}) \otimes_A P \otimes_A \widehat{A} \to \opn{K}^{\vee}_{\infty}(A; \mathbf{a}) \otimes_A \Lambda_{\a} (P)
\]
is a quasi-isomorphism. To see this, note that by (\ref{eqn:composition-is-comp}) there is a commutative diagram
\[
\begin{tikzcd}[column sep=huge, row sep=huge]
\opn{K}^{\vee}_{\infty}(A; \mathbf{a}) \otimes_A P 
\arrow{r} \arrow{d}
& \opn{K}^{\vee}_{\infty}(A; \mathbf{a}) \otimes_A P \otimes_A \widehat{A}
\arrow{ld}{\opn{Rest}_{\widehat{A}/A}(\phi)}  \\
\opn{K}^{\vee}_{\infty}(A; \mathbf{a}) \otimes_A \Lambda_{\a} (P)
\end{tikzcd}
\]
The horizontal map in this diagram is a quasi-isomorphism by applying Lemma \ref{lem:tensor-completion-with-koszul} to $M = A$, 
while the vertical map in this diagram is a quasi-isomorphisms by applying Lemma \ref{lem:tensor-completion-with-koszul} to $M = P$.
Hence, $\opn{Rest}_{\widehat{A}/A}(\phi)$ is also a quasi-isomorphism, and this implies that $\phi$ is a quasi-isomorphism.
\end{proof}

Dually to (\ref{eqn:psi-comp}) and (\ref{eqn:composition-is-comp}), 
given a commutative ring $A$, a finitely generated ideal $\a\subseteq A$, and a complex of $A$-modules $M$, 
there is a natural $\widehat{A}$-linear map
\begin{equation}\label{eqn:tor-chai}
\chi: \widehat{\Gamma}_{\a} (M) \to \opn{Hom}_A(\widehat{A},M)
\end{equation}
given by
\[
m \mapsto (f_m(a):=a\cdot m).
\]
Applying the forgetful functor $\opn{Rest}_{\widehat{A}/A}:\cat{C}(\widehat{A}) \to \cat{C}(A)$ to $\chi$,
the inclusion map 
\[
\Gamma_{\a}(M) \inj M
\]
factors as
\begin{equation}\label{eqn:composition-is-tor}
\Gamma_{\a} (M) \xrightarrow{\opn{Rest}_{\widehat{A}/A}(\chi)} \opn{Hom}_A(\widehat{A},M) \to M,
\end{equation}
where the map $\opn{Hom}_A(\widehat{A},M) \to M$ is induced by the completion map $A\to \widehat{A}$.

\begin{thm}\label{thm:LLambdaRGamma}
Let $A$ be a commutative ring, let $\a\subseteq A$ be a finitely generated weakly proregular ideal, set $\widehat{A}:= \Lambda_{\a}(A)$, and assume that $\widehat{\a} := \widehat{A}\cdot \a$ is also weakly proregular. Then for any $M \in \cat{D}(A)$, there is a natural isomorphism
\[
\mrm{L}\Lambda_{\widehat{\a}} ( \mrm{R}\widehat{\Gamma}_{\a} (M) ) \cong \mrm{L}\widehat{\Lambda}_{\a}(M)
\]
in $\cat{D}(\widehat{A})$.
\end{thm}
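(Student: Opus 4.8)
The plan is to mirror the proof of Theorem \ref{thm:RGammaLLambda}, but on the torsion side, using the telescope complex in place of the infinite dual Koszul complex and $\opn{Hom}$ in place of $\otimes$. Let $P \to M$ be a K-flat resolution over $A$, let $\mathbf{a}$ be a finite sequence generating $\a$, and let $\widehat{\mathbf{a}}$ be its image in $\widehat{A}$. First I would compute $\mrm{L}\widehat{\Lambda}_{\a}(M)$: since $P$ is K-flat, $\mrm{L}\widehat{\Lambda}_{\a}(M) \cong \widehat{\Lambda}_{\a}(P)$, and since $\a$ is weakly proregular, this is quasi-isomorphic to $\opn{Hom}_A(\opn{Tel}(A;\mathbf{a}), P)$ with its natural $\widehat{A}$-structure — one should cite \cite[Corollary 5.25]{PSY1} together with the $\widehat{A}$-linear refinement analogous to the one used for $\mrm{R}\widehat{\Gamma}_{\a}$ in Theorem \ref{thm:RGammaLLambda} (this is the dual of the statement pulled from \cite{Sh2} there).

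Next I would compute the left-hand side. We have $\mrm{R}\widehat{\Gamma}_{\a}(M) \cong \opn{K}^{\vee}_{\infty}(A; \mathbf{a}) \otimes_A P$ as a complex of $\widehat{A}$-modules (again via the $\widehat{A}$-linear version of the weak proregularity isomorphism, exactly as at the start of the proof of Theorem \ref{thm:RGammaLLambda}). Since $\widehat{\a}$ is weakly proregular, $\mrm{L}\Lambda_{\widehat{\a}}(-) \cong \opn{Hom}_{\widehat{A}}(\opn{Tel}(\widehat{A};\widehat{\mathbf{a}}), -)$; applying this to the K-flat complex $\opn{K}^{\vee}_{\infty}(A;\mathbf{a})\otimes_A P$ (which is K-flat, hence in particular has a well-defined derived completion computed by this formula) and using the base change isomorphism $\opn{Tel}(\widehat{A};\widehat{\mathbf{a}}) \cong \opn{Tel}(A;\mathbf{a})\otimes_A \widehat{A}$ together with Hom-tensor adjunction, I expect to rewrite the left-hand side as $\opn{Hom}_A(\opn{Tel}(A;\mathbf{a}), \opn{K}^{\vee}_{\infty}(A;\mathbf{a})\otimes_A P)$, carrying its natural $\widehat{A}$-module structure.

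It then remains to produce a natural $\widehat{A}$-linear quasi-isomorphism
\[
\opn{Hom}_A(\opn{Tel}(A;\mathbf{a}), \opn{K}^{\vee}_{\infty}(A;\mathbf{a})\otimes_A P) \to \opn{Hom}_A(\opn{Tel}(A;\mathbf{a}), P),
\]
and the natural candidate is $\opn{Hom}_A(\opn{Tel}(A;\mathbf{a}), \varepsilon)$ where $\varepsilon: \opn{K}^{\vee}_{\infty}(A;\mathbf{a})\otimes_A P \to P$ is adjoint to the augmentation-type map; more precisely I would use that $\opn{Hom}_A(\opn{Tel}(A;\mathbf{a}), -) \simeq \mrm{L}\Lambda_{\a}$ and that the composite $\mrm{L}\Lambda_{\a}(\mrm{R}\Gamma_{\a}(P)) \to \mrm{L}\Lambda_{\a}(P)$ is an isomorphism — this is precisely the ordinary (non-completed) MGM statement that $\mrm{L}\Lambda_{\a}$ kills the cone of $\mrm{R}\Gamma_{\a}(P)\to P$, which follows from \cite[Theorem 7.11]{PSY1} (or directly from \cite[Lemma 7.6]{PSY1} as used in Lemma \ref{lem:tensor-completion-with-koszul}). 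As in Theorem \ref{thm:RGammaLLambda}, to check this map is a quasi-isomorphism it suffices to check it after applying $\opn{Rest}_{\widehat{A}/A}$, where it becomes the underlying $A$-linear map and the MGM argument applies directly.

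The main obstacle I anticipate is bookkeeping of the $\widehat{A}$-module structures: each of the quasi-isomorphisms above (the weak proregularity formula for $\mrm{R}\widehat{\Gamma}_{\a}$ and for $\mrm{L}\widehat{\Lambda}_{\a}$, the base change for the telescope complex, and the Hom-tensor adjunction) must be checked to be $\widehat{A}$-linear and natural, and one must verify that the forgetful-functor reduction is legitimate — i.e.\ that a map in $\cat{C}(\widehat{A})$ whose image in $\cat{D}(A)$ is an isomorphism is an isomorphism in $\cat{D}(\widehat{A})$, which holds because $\opn{Rest}_{\widehat{A}/A}$ is exact and conservative. Apart from this, all the pieces are dual to steps already carried out in the proof of Theorem \ref{thm:RGammaLLambda}, so the argument should go through smoothly.
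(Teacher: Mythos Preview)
There is a genuine gap, and it is more than bookkeeping: the complexes you write down do not carry $\widehat{A}$-module structures at all. The complex $\opn{K}^{\vee}_{\infty}(A;\mathbf{a})\otimes_A P$ is built from localizations of $A$ and a K-flat $A$-complex, so it is only a complex of $A$-modules; it cannot represent $\mrm{R}\widehat{\Gamma}_{\a}(M)$ in $\cat{D}(\widehat{A})$. Your reference to ``exactly as at the start of the proof of Theorem~\ref{thm:RGammaLLambda}'' misreads that proof: the formula there is $\mrm{R}\widehat{\Gamma}_{\a}(M)\cong\opn{K}^{\vee}_{\infty}(A;\mathbf{a})\otimes_A P\otimes_A\widehat{A}$, and the trailing $\otimes_A\widehat{A}$ is precisely what supplies the $\widehat{A}$-action. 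The same problem afflicts $\opn{Hom}_A(\opn{Tel}(A;\mathbf{a}),P)$: it is only an $A$-complex, and the $\widehat{A}$-linear formula from \cite{Sh2} for $\mrm{L}\widehat{\Lambda}_{\a}$ is $\mrm{R}\opn{Hom}_A(\opn{Tel}(A;\mathbf{a})\otimes_A\widehat{A},M)$, which requires a K-injective resolution of $M$ to unwind and does not collapse to $\opn{Hom}_A(\opn{Tel}(A;\mathbf{a}),P)$. Consequently the comparison map you build from the augmentation $\opn{K}^{\vee}_{\infty}(A;\mathbf{a})\otimes_A P\to P$ lives only in $\cat{C}(A)$, and the reduction-by-restriction step has nothing to reduce from.

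The paper fixes this by taking a K-injective resolution $M\to I$ rather than a K-flat one. Then $\mrm{R}\widehat{\Gamma}_{\a}(M)=\widehat{\Gamma}_{\a}(I)$ is genuinely a complex of $\widehat{A}$-modules, and on the other side one gets $\mrm{L}\widehat{\Lambda}_{\a}(M)\cong\opn{Hom}_A(\opn{Tel}(A;\mathbf{a}),\opn{Hom}_A(\widehat{A},I))$, with the $\widehat{A}$-action coming from the inner $\opn{Hom}_A(\widehat{A},-)$. The $\widehat{A}$-linear comparison is then induced by the natural map $\chi\colon\widehat{\Gamma}_{\a}(I)\to\opn{Hom}_A(\widehat{A},I)$ of (\ref{eqn:tor-chai}), after which the restriction-to-$A$ argument and the MGM input proceed just as you outline. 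So the overall architecture you propose is correct, but the choice of resolution is not a free parameter here: the K-injective resolution is exactly the device that makes $\widehat{A}$-structures available on both sides and allows an honestly $\widehat{A}$-linear comparison map to be written down.
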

\begin{proof}
Let $M \to I$ be a K-injective resolution over $A$. By \cite[Theorem 3.6]{Sh2}, 
since $\a$ is weakly proregular,
there are natural isomorphisms
\[
\mrm{L}\widehat{\Lambda}_{\a}(M) \cong \mrm{L}\widehat{\Lambda}_{\a}(I) \cong \mrm{R}\opn{Hom}_A(\opn{Tel}(A;\mathbf{a})\otimes_A \widehat{A},I) \cong
\opn{Hom}_A(\opn{Tel}(A;\mathbf{a}),\opn{Hom}_A(\widehat{A},I)),
\]
while by \cite[Corollary 5.25]{PSY1}, the base change property of the telescope complex, 
and the fact that $\widehat{\a}$ is weakly proregular, 
there are natural isomorphisms
\[
\mrm{L}\Lambda_{\widehat{\a}} ( \mrm{R}\widehat{\Gamma}_{\a} (M) ) \cong
\mrm{R}\opn{Hom}_{\widehat{A}}(\opn{Tel}(A;\mathbf{a})\otimes_A \widehat{A},\widehat{\Gamma}_{\a}(I) ) \cong \opn{Hom}_A(\opn{Tel}(A;\mathbf{a}),\widehat{\Gamma}_{\a}(I)).
\]
It follows that it is enough to show that there is a natural $\widehat{A}$-linear quasi-isomorphism
\[
\opn{Hom}_A(\opn{Tel}(A;\mathbf{a}),\widehat{\Gamma}_{\a}(I)) \to \opn{Hom}_A(\opn{Tel}(A;\mathbf{a}),\opn{Hom}_A(\widehat{A},I)). 
\]
Applying the functor $\opn{Hom}_A(\opn{Tel}(A;\mathbf{a}),-)$ to the map $\chi$ from (\ref{eqn:tor-chai}), 
we obtain a natural $\widehat{A}$-linear map
\begin{equation}\label{eqn:before-rest-tor}
\opn{Hom}_A(\opn{Tel}(A;\mathbf{a}),\widehat{\Gamma}_{\a}(I)) \to \opn{Hom}_A(\opn{Tel}(A;\mathbf{a}),\opn{Hom}_A(\widehat{A},I)). 
\end{equation}
To show that this map is a quasi-isomorphism, we apply the forgetful functor 
\[
\opn{Rest}_{\widehat{A}/A} : \cat{D}(\widehat{A}) \to \cat{D}(A),
\]
and obtain an $A$-linear map
\begin{equation}\label{eqn:after-rest-tor}
\opn{Hom}_A(\opn{Tel}(A;\mathbf{a}),\Gamma_{\a}(I)) \to \opn{Hom}_A(\opn{Tel}(A;\mathbf{a}),\opn{Hom}_A(\widehat{A},I))
\end{equation}
which by (\ref{eqn:composition-is-tor}) fits into the commutative diagram
\begin{equation}\label{eqn:f-diag-tor}
\begin{tikzcd}[column sep=huge, row sep=huge]
\opn{Hom}_A(\opn{Tel}(A;\mathbf{a}),I) & \opn{Hom}_A(\opn{Tel}(A;\mathbf{a}),\opn{Hom}_A(\widehat{A},I)) \ar{l}\\
\opn{Hom}_A(\opn{Tel}(A;\mathbf{a}),\Gamma_{\a}(I))\ar{u}\ar{ur}
\end{tikzcd}
\end{equation}

By the proof of Lemma \ref{lem:tensor-completion-with-koszul},
the map $\opn{Tel}(A;\mathbf{a}) \to \opn{Tel}(A;\mathbf{a})\otimes_A \widehat{A}$ 
induced by the completion map $A \to \widehat{A}$ is a quasi-isomorphism,
so by adjunction the horizontal map in (\ref{eqn:f-diag-tor}) is also a quasi-isomorphism.
By \cite[Proposition 5.8]{PSY1}, the complex $\opn{Tel}(A;\mathbf{a})$ is a K-projective resolution of $\mrm{R}\Gamma_{\a}(A)$, so the first isomorphism of \cite[Theorem 7.12]{PSY1} implies that the vertical map in (\ref{eqn:f-diag-tor}) is a quasi-isomorphism.
It follows that (\ref{eqn:before-rest-tor}) is a quasi-isomorphism.
\end{proof}

\begin{rem}\label{rem:MGM}
The Matlis-Greenlees-May equivalence is equivalent to the fact that given a commutative ring $A$ and a finitely generated weakly proregular ideal $\a\subseteq A$, 
the diagram
\[
\begin{tikzcd}[column sep=huge, row sep=huge]
\cat{D}(A)_{\a-\opn{tor}}  
\arrow{r}{\mrm{L}\Lambda_{\a}}
\arrow[loop left]{}{1_{\cat{D}(A)_{\a-\opn{tor}}}}     
& \cat{D}(A)_{\a-\opn{com}}  \arrow[bend right=-50]{l}{\mrm{R}\Gamma_{\a}}
\arrow[loop right]{}{1_{\cat{D}(A)_{\a-\opn{com}}}}     
\end{tikzcd}
\]
is commutative up to natural isomorphisms, where
$1_{\cat{D}(A)_{\a-\opn{tor}}}$ and $1_{\cat{D}(A)_{\a-\opn{com}}}$ are the identity functors on each of these categories.
Explicitly, this means that there are natural isomorphisms
\[
\mrm{L}\Lambda_{\a}(\mrm{R}\Gamma_{\a}(M)) \cong M, \quad \mrm{R}\Gamma_{\a}(\mrm{L}\Lambda_{\a}(N)) \cong N,
\]
for any $M \in \cat{D}(A)_{\a-\opn{com}}$ and any $N \in \cat{D}(A)_{\a-\opn{tor}}$.
Theorems \ref{thm:RGammaLLambda} and \ref{thm:LLambdaRGamma} enlarge this picture, 
and imply that given a commutative ring $A$ and a finitely generated weakly proregular ideal $\a\subseteq A$ such that $\widehat{\a}:=\widehat{A}\cdot \a$ is also weakly proregular, the diagram
\[
\begin{tikzcd}[column sep=huge, row sep=huge]
& \cat{D}(A) 
\arrow{dl}[swap]{\mrm{R}\widehat{\Gamma}_{\a}}
\arrow{dr}{\mrm{L}\widehat{\Lambda}_{\a}} & \\
\cat{D}(\widehat{A})_{\widehat{\a}-\opn{tor}}  
\arrow{rr}{\mrm{L}\Lambda_{\widehat{\a}}} 
\arrow[loop left]{}{1_{\cat{D}(\widehat{A})_{\widehat{\a}-\opn{tor}}}}
& & \cat{D}(\widehat{A})_{\widehat{\a}-\opn{com}} 
\arrow[bend right=-50]{ll}{\mrm{R}\Gamma_{\widehat{\a}}}
\arrow[loop right]{}{1_{\cat{D}(\widehat{A})_{\widehat{\a}-\opn{com}}}}
\end{tikzcd}
\]
is commutative up to natural isomorphisms.

\end{rem}

\begin{rem}
In case $A$ is noetherian,
Theorems \ref{thm:RGammaLLambda} and \ref{thm:LLambdaRGamma} were proved independently by 
Sather-Wagstaff and Wicklein (\cite[Section 4]{SWW2}). 
In this case $\widehat{A}$ is flat over $A$, 
and this fact is crucially used in their proofs.
For our applications below,
however, we cannot assume that $A$ is noetherian (as enveloping algebras of adic noetherian algebras seldom are), so the more complicated proofs given above are essential for our needs.
\end{rem}

\section{Adic reduction to the diagonal}\label{sec:ARD}

We are now able to answer the first question of Section \ref{sec:quest}. 
Recall that a pair $(A,\a)$ is called an adic noetherian ring if $A$ is a noetherian ring,
$\a\subseteq A$ is an ideal,
and the natural map $A \to \Lambda_{\a}(A)$ is an isomorphism.
Given a noetherian ring $\K$, 
an adic noetherian $\K$-algebra $(A,\a)$ is called \textbf{formally essentially of finite type} over $\K$ if $A/\a$
is essentially of finite type over $\K$ (that is, if $A/\a$ is a localization of finite type $\K$-algebra).
In particular, if $A$ is essentially of finite type over $\K$,
and $\a \subseteq A$ is an ideal, then $(\Lambda_{\a}(A),\a\cdot \Lambda_{\a}(A))$ is formally essentially of finite type over $\K$.

\begin{thm}\label{thm:ard}
Let $\K$ be a noetherian ring, and let $(A,\a)$ be an adic noetherian $\K$-algebra which is flat and formally essentially of finite type over $\K$. 
Set 
\[
I:= \a\otimes_{\K} A + A\otimes_{\K} \a \subseteq A\otimes_{\K} A.
\]
\begin{enumerate}
\item
For any $M, N \in \cat{D}(A)$, there is a natural isomorphism
\[
A\otimes^{\mrm{L}}_{ A\widehat{\otimes}_{\K} A } \mrm{R}\widehat{\Gamma}_I ( M\otimes^{\mrm{L}}_{\K} N ) \cong \mrm{R}\Gamma_{\a}(M \otimes^{\mrm{L}}_A N)
\]
in $\cat{D}(A)$. If moreover either $M$ or $N$ is cohomologically $\a$-torsion, then
there is a natural isomorphism
\[
A\otimes^{\mrm{L}}_{ A\widehat{\otimes}_{\K} A } \mrm{R}\widehat{\Gamma}_I ( M\otimes^{\mrm{L}}_{\K} N ) \cong M \otimes^{\mrm{L}}_A N
\]
in $\cat{D}(A)$.
\item
For any $M, N \in \cat{D}(A)$, there is a natural isomorphism
\[
\mrm{L}\Lambda_{\a} (A\otimes^{\mrm{L}}_{ A\widehat{\otimes}_{\K} A } \mrm{L}\widehat{\Lambda}_I ( M\otimes^{\mrm{L}}_{\K} N )) \cong \mrm{L}\Lambda_{\a}(M \otimes^{\mrm{L}}_A N)
\]
in $\cat{D}(A)$.
\item
For any $M,N \in \cat{D}^{-}_{\mrm{f}}(A)$, there is a natural isomorphism
\[
A\otimes^{\mrm{L}}_{ A\widehat{\otimes}_{\K} A } \mrm{L}\widehat{\Lambda}_I ( M\otimes^{\mrm{L}}_{\K} N ) \cong M \otimes^{\mrm{L}}_A N
\]
in $\cat{D}(A)$.
\end{enumerate}
\end{thm}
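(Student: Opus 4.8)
The idea is to bootstrap from the classical reduction to the diagonal isomorphism $A\otimes^{\mrm{L}}_{A\otimes_{\K} A}(M\otimes^{\mrm{L}}_{\K} N)\cong M\otimes^{\mrm{L}}_A N$, valid since $A$ is flat over $\K$, by feeding it through the derived torsion and completion functors and invoking the weak proregularity machinery of Section~\ref{sec:prel} together with Theorems~\ref{thm:RGammaLLambda} and~\ref{thm:LLambdaRGamma}. Throughout write $B:=A\otimes_{\K} A$, so $\widehat{B}=A\widehat{\otimes}_{\K} A=\Lambda_I(B)$, let $\mathbf{a}=(a_1,\dots,a_n)$ generate $\a$, and let $\mathbf{i}=(a_1\otimes 1,\dots,a_n\otimes 1,1\otimes a_1,\dots,1\otimes a_n)$ be the induced generating sequence of $I$. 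The multiplication map $\mu\colon B\to A$ carries $\mathbf{i}$ to a generating sequence of $\a$ and, since $A$ is $\a$-adically complete, extends to $\widehat{\mu}\colon\widehat{B}\to A$ with $\widehat{\mu}(\widehat{I})=\a$, where $\widehat{I}:=\widehat{B}\cdot I$; thus $A$ is a $\widehat{B}$-algebra. The one input that is not purely formal within this setup --- and the main obstacle --- is that $I$ is weakly proregular in $B$. I would establish this by a K\"unneth computation: base change and the multiplicativity of Koszul complexes under concatenation give isomorphisms of complexes of $B$-modules $\opn{K}(B;\mathbf{i}^k)\cong\opn{K}(A;\mathbf{a}^k)\otimes_{\K}\opn{K}(A;\mathbf{a}^k)$, the two tensor factors carrying the two $A$-module structures on $B$, hence, dualizing over $B$, $\opn{K}^{\bullet}(B;\mathbf{i}^k)\cong\opn{K}^{\bullet}(A;\mathbf{a}^k)\otimes_{\K}\opn{K}^{\bullet}(A;\mathbf{a}^k)$ --- a tensor product over $\K$ of bounded complexes of free, hence (as $A$ is $\K$-flat) $\K$-flat, modules. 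The K\"unneth spectral sequence then puts on each $H^q(\opn{K}^{\bullet}(B;\mathbf{i}^k))$ a finite filtration, of length bounded independently of $k$, whose graded pieces are $\opn{Tor}^{\K}_p(H^u(\opn{K}^{\bullet}(A;\mathbf{a}^k)),H^v(\opn{K}^{\bullet}(A;\mathbf{a}^k)))$ with $u+v=q+p$; for $q\ge 1$ every such piece has $u\ge 1$ or $v\ge 1$, so weak proregularity of $\mathbf{a}$ in the noetherian ring $A$, in its Koszul-cohomology form (\cite{PSY1,Sc}), makes the transition maps in $k$ on the relevant Koszul cohomology, hence on these $\opn{Tor}$'s, eventually zero; a filtration-length argument then shows $\{H^q(\opn{K}^{\bullet}(B;\mathbf{i}^k))\}_k$ is pro-zero for all $q\ge 1$, i.e.\ $\mathbf{i}$, equivalently $I$, is weakly proregular. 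Consequently $\widehat{B}=\Lambda_I(B)$ is noetherian, since $\widehat{B}/\widehat{I}\cong B/I\cong A/\a\otimes_{\K} A/\a$ is essentially of finite type over the noetherian ring $\K$; in particular $\widehat{I}$ is also weakly proregular, and Theorems~\ref{thm:RGammaLLambda} and~\ref{thm:LLambdaRGamma} are available for the pair $(B,I)$.

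For (1), put $Y:=M\otimes^{\mrm{L}}_{\K} N$. Classical reduction to the diagonal gives $\mrm{R}\Gamma_{\a}(M\otimes^{\mrm{L}}_A N)\cong\mrm{R}\Gamma_{\a}(A\otimes^{\mrm{L}}_B Y)$. Since $\a$ is weakly proregular, $\mrm{R}\Gamma_{\a}(A\otimes^{\mrm{L}}_B Y)\cong\opn{K}^{\vee}_{\infty}(A;\mathbf{a})\otimes_A(A\otimes^{\mrm{L}}_B Y)$, and using $\opn{K}^{\vee}_{\infty}(A;\mathbf{a})\simeq\opn{K}^{\vee}_{\infty}(B;\mathbf{i})\otimes_B A$ (base change along $\mu$; repetition of generators is harmless, $\opn{K}^{\vee}_{\infty}$ being idempotent under $\otimes_A$ in each variable), the K-flatness of $\opn{K}^{\vee}_{\infty}(B;\mathbf{i})$, associativity of the derived tensor product and weak proregularity of $I$, this becomes $(\opn{K}^{\vee}_{\infty}(B;\mathbf{i})\otimes^{\mrm{L}}_B Y)\otimes^{\mrm{L}}_B A\cong\mrm{R}\Gamma_I(Y)\otimes^{\mrm{L}}_B A$. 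Finally \cite[Theorem~3.2]{Sh2}, applied to $(B,I)$, gives $\mrm{R}\widehat{\Gamma}_I(Y)\cong\mrm{R}\Gamma_I(Y)\otimes^{\mrm{L}}_B\widehat{B}$, so $\mrm{R}\Gamma_I(Y)\otimes^{\mrm{L}}_B A\cong A\otimes^{\mrm{L}}_{\widehat{B}}\mrm{R}\widehat{\Gamma}_I(Y)$ by associativity; this is the first isomorphism of (1). For the second, if $M$ or $N$ is cohomologically $\a$-torsion then so is $M\otimes^{\mrm{L}}_A N$, since the derived torsion functor commutes with $(-)\otimes^{\mrm{L}}_A N$; hence $\mrm{R}\Gamma_{\a}(M\otimes^{\mrm{L}}_A N)\cong M\otimes^{\mrm{L}}_A N$, and we combine with the first isomorphism.

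For (2), using $\mrm{L}\Lambda_{\a}\circ\mrm{R}\Gamma_{\a}\cong\mrm{L}\Lambda_{\a}$ (a formal consequence of the MGM equivalence) and then (1), I get $\mrm{L}\Lambda_{\a}(M\otimes^{\mrm{L}}_A N)\cong\mrm{L}\Lambda_{\a}(\mrm{R}\Gamma_{\a}(M\otimes^{\mrm{L}}_A N))\cong\mrm{L}\Lambda_{\a}(A\otimes^{\mrm{L}}_{\widehat{B}}\mrm{R}\widehat{\Gamma}_I(Y))$. It then suffices to prove that for any $Z\in\cat{D}(\widehat{B})$ one has $\mrm{L}\Lambda_{\a}(A\otimes^{\mrm{L}}_{\widehat{B}}Z)\cong\mrm{L}\Lambda_{\a}(A\otimes^{\mrm{L}}_{\widehat{B}}\mrm{L}\Lambda_{\widehat{I}}(Z))$: let $C$ be the cone of $Z\to\mrm{L}\Lambda_{\widehat{I}}(Z)$, so $\mrm{L}\Lambda_{\widehat{I}}(C)=0$ by idempotence of $\mrm{L}\Lambda_{\widehat{I}}$, hence $\mrm{R}\Gamma_{\widehat{I}}(C)=0$ by the MGM equivalence, hence --- by the same Koszul base change as in (1), now for the pair $(\widehat{B},\widehat{I})$ along $\widehat{\mu}$ --- $\mrm{R}\Gamma_{\a}(A\otimes^{\mrm{L}}_{\widehat{B}}C)\cong\mrm{R}\Gamma_{\widehat{I}}(C)\otimes^{\mrm{L}}_{\widehat{B}}A=0$, hence $\mrm{L}\Lambda_{\a}(A\otimes^{\mrm{L}}_{\widehat{B}}C)=0$ again by the MGM equivalence; applying the triangulated functor $A\otimes^{\mrm{L}}_{\widehat{B}}(-)$ and then $\mrm{L}\Lambda_{\a}$ to the distinguished triangle $Z\to\mrm{L}\Lambda_{\widehat{I}}(Z)\to C\to Z[1]$ gives the asserted isomorphism. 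Taking $Z=\mrm{R}\widehat{\Gamma}_I(Y)$ and using Theorem~\ref{thm:LLambdaRGamma} for $(B,I)$ to identify $\mrm{L}\Lambda_{\widehat{I}}(\mrm{R}\widehat{\Gamma}_I(Y))\cong\mrm{L}\widehat{\Lambda}_I(Y)$ completes (2).

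For (3), finiteness makes the completion explicit. Choose resolutions $P_M\to M$ and $P_N\to N$ by bounded-above complexes of finitely generated free $A$-modules; since $A$ is $\K$-flat these are also K-flat over $\K$, so $P_M\otimes_{\K} P_N$ is a bounded-above complex of finitely generated free $B$-modules representing $Y=M\otimes^{\mrm{L}}_{\K} N$ in $\cat{D}(B)$, in particular K-flat over $B$. Computing $\mrm{L}\widehat{\Lambda}_I$ via this K-flat resolution and using that $\Lambda_I$ carries a finitely generated free $B$-module to the corresponding finitely generated free $\widehat{B}$-module, we obtain $\mrm{L}\widehat{\Lambda}_I(Y)\cong\widehat{B}\otimes_B(P_M\otimes_{\K} P_N)$, a bounded-above complex of finitely generated free $\widehat{B}$-modules, hence K-flat over $\widehat{B}$. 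Therefore $A\otimes^{\mrm{L}}_{\widehat{B}}\mrm{L}\widehat{\Lambda}_I(Y)\cong A\otimes_B(P_M\otimes_{\K} P_N)\cong A\otimes^{\mrm{L}}_B Y\cong M\otimes^{\mrm{L}}_A N$, the last step being classical reduction to the diagonal once more. (Alternatively, (3) can be deduced from (2), since both sides are cohomologically $\a$-adically complete --- the right-hand side because $M\otimes^{\mrm{L}}_A N\in\cat{D}^{-}_{\mrm{f}}(A)$ with $A$ noetherian and $\a$-adically complete, the left-hand side by the computation above --- and $\mrm{L}\Lambda_{\a}$ is the identity on $\cat{D}(A)_{\a-\opn{com}}$.)
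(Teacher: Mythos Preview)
Your proposal is correct and follows the same overall scaffold as the paper --- establish weak proregularity of $I$, then bootstrap from the classical diagonal isomorphism via the derived torsion/completion calculus --- but several steps take a different route, so a brief comparison is worthwhile.

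For weak proregularity of $I$ the paper simply quotes \cite[Theorem~2.6]{Sh2}, whereas you sketch a direct K\"unneth/pro-zero argument; your argument is essentially a self-contained proof of that cited result in this special case. For part~(1) the two arguments coincide up to packaging: the paper invokes \cite[Theorem~5.1]{Sh2} as a black box, while you unfold the same Koszul base-change computation by hand. For part~(2) there is a genuine duality: the paper starts on the $\mrm{L}\widehat{\Lambda}_I$ side, inserts $\mrm{R}\Gamma_{\a}$, passes $\mrm{R}\Gamma$ through the tensor product, and then applies Theorem~\ref{thm:RGammaLLambda}; you instead start from $\mrm{L}\Lambda_{\a}(M\otimes^{\mrm{L}}_A N)$, use part~(1), and close the loop with Theorem~\ref{thm:LLambdaRGamma} after your cone lemma (which is equivalent in content to the paper's use of \cite[Corollary~3.14]{Sh2}). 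Both routes are valid and of comparable length. The sharpest difference is in part~(3): the paper deduces it from~(2) by checking that both sides lie in $\cat{D}(A)_{\a\text{-}\mrm{com}}$, while your first argument is a direct computation --- after choosing bounded-above finite free resolutions, $\mrm{L}\widehat{\Lambda}_I$ is literally $\widehat{B}\otimes_B(-)$ termwise, so the $\widehat{B}$ cancels against $A\otimes^{\mrm{L}}_{\widehat{B}}(-)$ and one lands back on the classical diagonal isomorphism. This is more elementary and in fact independent of~(2) and of Theorems~\ref{thm:RGammaLLambda}--\ref{thm:LLambdaRGamma}; the price is that it is specific to $\cat{D}^{-}_{\mrm{f}}$, whereas the paper's route exhibits~(3) as a formal consequence of the general statement~(2).
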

\begin{proof}
\leavevmode
\begin{enumerate}
\item According to \cite[Theorem 2.6]{Sh2}, 
since $\K$ is noetherian and $(A,\a)$ is flat and formally essentially of finite type over $\K$,
the ideal $I$ is weakly proregular.
Since $A$ is $\a$-adically complete, we have that $\mrm{R}\widehat{\Gamma}_{\a} = \mrm{R}\Gamma_{\a}$, 
so by \cite[Theorem 5.1]{Sh2}, there is a natural isomorphism
\[
A\otimes^{\mrm{L}}_{A\widehat{\otimes}_{\K} A}  \mrm{R}\widehat{\Gamma}_{I} (M\otimes^{\mrm{L}}_{\K} N) 
\cong
\mrm{R}\Gamma_{\a} \left( 
A\otimes^{\mrm{L}}_{A\otimes_{\K} A}  (M\otimes^{\mrm{L}}_{\K} N) 
\right).
\]
By classical reduction to the diagonal we get a natural isomorphism
\[
A\otimes^{\mrm{L}}_{A\widehat{\otimes}_{\K} A}  \mrm{R}\widehat{\Gamma}_{I} (M\otimes^{\mrm{L}}_{\K} N) 
\cong
\mrm{R}\Gamma_{\a} \left( M\otimes^{\mrm{L}}_A N \right).
\]
If either $M$ or $N$ is cohomologically $\a$-torsion, 
it follows from \cite[Corollary 4.26]{PSY1} that
\[
\mrm{R}\Gamma_{\a}(M)\otimes^{\mrm{L}}_A N \cong \mrm{R}\Gamma_{\a}(M\otimes^{\mrm{L}}_A N),
\]
so we obtain the second claim of (1).
\item By \cite[Lemma 7.2]{PSY1}, we have a natural isomorphism
\[
\mrm{L}\Lambda_{\a} \left( 
A\otimes^{\mrm{L}}_{A\widehat{\otimes}_{\K} A} ( \mrm{L}\widehat{\Lambda}_I(M\otimes^{\mrm{L}}_{\K} N))
\right) \cong
\mrm{L}\Lambda_{\a} \mrm{R}\Gamma_{\a} \left( 
A\otimes^{\mrm{L}}_{A\widehat{\otimes}_{\K} A} ( \mrm{L}\widehat{\Lambda}_I(M\otimes^{\mrm{L}}_{\K} N))
\right).
\]
The fact that $A/\a$ is essentially of finite type over $\k$ implies that
\[
(A\otimes_{\K} A)/I \cong A/\a\otimes_{\K} A/\a
\]
is noetherian. Hence, by \cite[Corollary 2 after Proposition III.2.11.14]{Bou},
the ring $A\widehat{\otimes}_{\K} A$ is also noetherian.
Letting $\widehat{I} := (A\widehat{\otimes}_{\K} A) \cdot I$, it follows that $\widehat{I}$ is weakly proregular.
The image of $\widehat{I}$ under the natural map 
\[
A\widehat{\otimes}_{\K} A \to A
\]
is equal to $\a$. 
Hence, by \cite[Corollary 3.14]{Sh2}, we have that 
\[
\mrm{L}\Lambda_{\a} \mrm{R}\Gamma_{\a} \left( 
A\otimes^{\mrm{L}}_{A\widehat{\otimes}_{\K} A} ( \mrm{L}\widehat{\Lambda}_I(M\otimes^{\mrm{L}}_{\K} N))
\right) \cong
\mrm{L}\Lambda_{\a}  \left( 
A\otimes^{\mrm{L}}_{A\widehat{\otimes}_{\K} A}  \mrm{R}\Gamma_{\widehat{I}} ( \mrm{L}\widehat{\Lambda}_I(M\otimes^{\mrm{L}}_{\K} N)) 
\right).
\]
We may now apply Theorem \ref{thm:RGammaLLambda}, and obtain a natural isomorphism
\[
\mrm{L}\Lambda_{\a}  \left( 
A\otimes^{\mrm{L}}_{A\widehat{\otimes}_{\K} A}  \mrm{R}\Gamma_{\widehat{I}} ( \mrm{L}\widehat{\Lambda}_I(M\otimes^{\mrm{L}}_{\K} N)) 
\right) \cong
\mrm{L}\Lambda_{\a}  \left( 
A\otimes^{\mrm{L}}_{A\widehat{\otimes}_{\K} A}  \mrm{R}\widehat{\Gamma}_{I} (M\otimes^{\mrm{L}}_{\K} N) 
\right).
\]
Using (1) we see that
\[
\mrm{L}\Lambda_{\a}  \left( 
A\otimes^{\mrm{L}}_{A\widehat{\otimes}_{\K} A}  \mrm{R}\widehat{\Gamma}_{I} (M\otimes^{\mrm{L}}_{\K} N) 
\right) \cong
\mrm{L}\Lambda_{\a} \left(
\mrm{R}\Gamma_{\a} \left( M\otimes^{\mrm{L}}_A N \right)
\right),
\]
so the result follows from applying \cite[Lemma 7.2]{PSY1} again.
\item Let $P \iso M$ and $Q \iso N$ be bounded above resolutions made of finitely generated free $A$-modules.
It follows that $P\otimes_{\K} Q$ is a bounded above complex of finitely generated free
$A\otimes_{\K} A$-modules, so that
\[
\mrm{L}\widehat{\Lambda}_I(M\otimes^{\mrm{L}}_{\K} N) \cong \widehat{\Lambda}_I(P\otimes_{\K} Q)
\]
is a bounded above complex of finitely generated free
$A\widehat{\otimes}_{\K} A$-modules.
Hence, 
\[
A\otimes^{\mrm{L}}_{A\widehat{\otimes}_{\K} A} ( \mrm{L}\widehat{\Lambda}_I(M\otimes^{\mrm{L}}_{\K} N) ) \in \cat{D}^{-}_{\mrm{f}}(A).
\]
Since $A$ is noetherian and $\a$-adically complete, any finitely generated $A$-module is $\a$-adically complete, so it follows by \cite[Theorem 1.19]{PSY2} that 
\[
A\otimes^{\mrm{L}}_{A\widehat{\otimes}_{\K} A} ( \mrm{L}\widehat{\Lambda}_I(M\otimes^{\mrm{L}}_{\K} N))
\]
is cohomologically $\a$-adically complete. Thus,
\[
A\otimes^{\mrm{L}}_{A\widehat{\otimes}_{\K} A} ( \mrm{L}\widehat{\Lambda}_I(M\otimes^{\mrm{L}}_{\K} N)) \cong 
\mrm{L}\Lambda_{\a} \left( 
A\otimes^{\mrm{L}}_{A\widehat{\otimes}_{\K} A } ( \mrm{L}\widehat{\Lambda}_I(M\otimes^{\mrm{L}}_{\K} N))
\right).
\]
Similarly, since $M, N \in \cat{D}^{-}_{\mrm{f}}(A)$, 
we have that $M\otimes^{\mrm{L}}_A N \in \cat{D}^{-}_{\mrm{f}}(A)$,
so $M\otimes^{\mrm{L}}_A N$ is also cohomologically $\a$-adically complete. 
Hence, (3) follows from (2).
\end{enumerate}
\end{proof}

\section{Cofiniteness and derived completion}\label{sec:COF}

Our final result is a positive answer to the second question of Section \ref{sec:quest}.
In the particular case where $A$ is noetherian it was also obtained independently by Sather-Wagstaff and Wicklein (\cite[Theorem 1.3]{SWW1}). Enveloping algebras like those occurring in Theorem \ref{thm:ard} are non-noetherian examples that satisfy the conditions of the next result.

\begin{thm}\label{thm:cofinite}
Let $A$ be a commutative ring, let $\a\subseteq A$ be a finitely generated weakly proregular ideal, denote by $\widehat{A}$ the $\a$-adic completion of $A$, and assume that $\widehat{A}$ is noetherian. (If $A$ is noetherian all these conditions are always satisfied for any ideal $\a\subseteq A$).
Given $M \in \cat{D}^{\mrm{b}}(A)$, the following are equivalent:
\begin{enumerate}
\item $\mrm{R}\opn{Hom}_A(A/\a,M) \in \cat{D}_{\mrm{f}}(A/\a)$.
\item $\mrm{L}\widehat{\Lambda}_{\a}(M) \in \cat{D}_{\mrm{f}}^{\mrm{b}}(\widehat{A})$.
\end{enumerate}
\end{thm}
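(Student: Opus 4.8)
The plan is to establish the equivalence by passing through the derived torsion functor and using the MGM equivalence together with Theorems \ref{thm:RGammaLLambda} and \ref{thm:LLambdaRGamma}. First I would observe that $\mrm{R}\opn{Hom}_A(A/\a,M)$ can be computed via a Koszul complex: writing $\mathbf{a} = (a_1,\dots,a_n)$ for a finite generating sequence of $\a$, there is a natural isomorphism $\mrm{R}\opn{Hom}_A(A/\a,M) \cong \mrm{R}\opn{Hom}_A(\opn{K}(A;\mathbf{a}),M)$ up to a shift (since $A/\a$ and the ordinary Koszul complex $\opn{K}(A;\mathbf{a})$ have the same annihilator-truncation behavior after applying $\mrm{R}\opn{Hom}_A(-,M)$ — more precisely, cohomology of $\mrm{R}\opn{Hom}_A(A/\a,M)$ is finitely generated over $A/\a$ for all $i$ iff the same holds for $\mrm{R}\opn{Hom}_A(\opn{K}(A;\mathbf{a}),M)$, which is the statement that $\mrm{R}\Gamma_{\a}(M)$ — or rather its Koszul model $\opn{K}^\vee_\infty(A;\mathbf{a})\otimes_A M$ — has finitely generated cohomologies over $A/\a$). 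The key reduction here is the standard fact that $\mrm{R}\opn{Hom}_A(A/\a, \mrm{R}\Gamma_\a(M)) \cong \mrm{R}\opn{Hom}_A(A/\a, M)$, so that (1) is equivalent to $\mrm{R}\opn{Hom}_A(A/\a, \mrm{R}\Gamma_\a(M)) \in \cat{D}_{\mrm{f}}(A/\a)$, i.e.\ a finiteness condition on the cohomologically $\a$-torsion complex $\mrm{R}\Gamma_\a(M)$.

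Next I would transfer this finiteness statement across the MGM equivalence. Since $\widehat{A}$ is noetherian, $A/\a \cong \widehat{A}/\widehat{\a}$ is noetherian, and $\widehat{\a}$ is weakly proregular (being an ideal in a noetherian ring), so Theorems \ref{thm:RGammaLLambda} and \ref{thm:LLambdaRGamma} apply. I would use Theorem \ref{thm:LLambdaRGamma} to write $\mrm{L}\widehat{\Lambda}_\a(M) \cong \mrm{L}\Lambda_{\widehat{\a}}(\mrm{R}\widehat{\Gamma}_\a(M))$ in $\cat{D}(\widehat{A})$, and note that $\mrm{R}\widehat{\Gamma}_\a(M)$ is cohomologically $\widehat{\a}$-torsion. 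By the MGM equivalence over the noetherian ring $\widehat{A}$, the functor $\mrm{L}\Lambda_{\widehat{\a}}$ restricted to $\cat{D}(\widehat{A})_{\widehat{\a}-\opn{tor}}$ is an equivalence onto $\cat{D}(\widehat{A})_{\widehat{\a}-\opn{com}}$ with quasi-inverse $\mrm{R}\Gamma_{\widehat{\a}}$, and this equivalence preserves and reflects the property of having finitely generated cohomologies over $\widehat{A}$ — this is a known fact about Greenlees--May duality over noetherian rings (a complex in $\cat{D}(\widehat{A})_{\widehat{\a}-\opn{com}}$ has finitely generated cohomology iff its image under $\mrm{R}\Gamma_{\widehat{\a}}$, which is $\widehat{\a}$-cofinite, has finitely generated $\mrm{Ext}$'s against $\widehat{A}/\widehat{\a}$, equivalently $\mrm{R}\opn{Hom}_{\widehat{A}}(\widehat{A}/\widehat{\a},-)$ has finitely generated cohomology). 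Thus condition (2), namely $\mrm{L}\widehat{\Lambda}_\a(M) \in \cat{D}^{\mrm b}_{\mrm f}(\widehat{A})$, is equivalent to $\mrm{R}\opn{Hom}_{\widehat{A}}(\widehat{A}/\widehat{\a}, \mrm{R}\widehat{\Gamma}_\a(M)) \in \cat{D}_{\mrm f}(\widehat{A}/\widehat{\a})$, modulo checking boundedness.

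Finally I would reconcile the two sides. The complex $\mrm{R}\opn{Hom}_{\widehat{A}}(\widehat{A}/\widehat{\a}, \mrm{R}\widehat{\Gamma}_\a(M))$ and the complex $\mrm{R}\opn{Hom}_A(A/\a, \mrm{R}\Gamma_\a(M))$ should agree: after applying $\opn{Rest}_{\widehat{A}/A}$ the underlying $A$-complexes of $\mrm{R}\widehat{\Gamma}_\a(M)$ and $\mrm{R}\Gamma_\a(M)$ coincide, and $\mrm{R}\opn{Hom}_A(A/\a, -)$ applied to an $\widehat{A}$-complex is naturally an $\widehat{A}/\widehat{\a} = A/\a$-complex computing the same thing as $\mrm{R}\opn{Hom}_{\widehat{A}}(\widehat{A}/\widehat{\a}, -)$, because $A/\a \cong \widehat{A}/\widehat{\a}$ and the $\opn{Ext}$ groups are computed by the Koszul complex on $\mathbf{a}$, which has the same base change behavior over $A$ and over $\widehat{A}$. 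For boundedness: if $M \in \cat{D}^{\mrm b}(A)$ then $\mrm{L}\widehat{\Lambda}_\a(M)$ is bounded because $\mrm{L}\widehat{\Lambda}_\a$ has finite cohomological amplitude (it is computed by $\opn{Hom}_A(\opn{Tel}(A;\mathbf{a}), -)$ composed with a bounded-amplitude functor), so the boundedness in (2) is automatic given the finiteness, and similarly (1) only asserts finiteness, not boundedness, so no extra argument is needed there. I expect the main obstacle to be the last reconciliation step — namely carefully identifying $\mrm{R}\opn{Hom}_A(A/\a, -)$ and $\mrm{R}\opn{Hom}_{\widehat{A}}(\widehat{A}/\widehat{\a}, -)$ on a complex that lives over $\widehat{A}$ but whose $A$-structure is what appears in condition (1), together with verifying that ``finitely generated cohomology over $\widehat{A}$'' is exactly detected by finiteness of these $\opn{Ext}$-modules for a complex known a priori to be cohomologically $\widehat{\a}$-complete; this is where the noetherian hypothesis on $\widehat{A}$ does the real work, via the standard finiteness criterion for complete complexes (e.g.\ the Nakayama-type argument that a derived $\widehat{\a}$-complete complex with finitely generated $H^i(\mrm{R}\opn{Hom}(\widehat{A}/\widehat{\a}, -))$ in each degree has finitely generated cohomology).
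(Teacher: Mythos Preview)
Your strategy matches the paper's: identify $\mrm{R}\opn{Hom}_A(A/\a,M) \cong \mrm{R}\opn{Hom}_{\widehat{A}}(\widehat{A}/\widehat{\a}, \mrm{R}\widehat{\Gamma}_\a(M))$, use Theorem~\ref{thm:LLambdaRGamma} to rewrite $\mrm{L}\widehat{\Lambda}_\a(M)$ as $\mrm{L}\Lambda_{\widehat{\a}}(\mrm{R}\widehat{\Gamma}_\a(M))$, and then invoke a finiteness criterion for torsion complexes over the noetherian ring $\widehat{A}$. The two obstacles you flag are exactly the external inputs the paper cites --- the $\mrm{R}\opn{Hom}$ identification is \cite[Lemma~2.5]{Sh1} and the Nakayama-type criterion is \cite[Theorem~3.10]{PSY2} --- so your outline is correct; the Koszul-complex discussion in your first paragraph is an unnecessary and slightly imprecise detour (e.g.\ $\opn{K}^\vee_\infty(A;\mathbf{a})\otimes_A M$ does not have $A/\a$-module cohomology), but the key reduction you extract from it is the right one.
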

\begin{proof}
Take some $M \in \cat{D}^{\mrm{b}}(A)$. 
According to \cite[Lemma 2.5]{Sh1}, there are natural isomorphisms
\[
\mrm{R}\opn{Hom}_{\widehat{A}}(A/\a,\mrm{R}\widehat{\Gamma}_{\a}(M)) \cong \mrm{R}\opn{Hom}_{\widehat{A}}(A/\a,\mrm{L}\widehat{\Lambda}_{\a}(M)) \cong \mrm{R}\opn{Hom}_{A}(A/\a,M)
\]
in $\cat{D}(A/\a)$. 
Now, if $\mrm{L}\widehat{\Lambda}_{\a}(M) \in \cat{D}^{\mrm{b}}_{\mrm{f}}(\widehat{A})$, then 
\[
\mrm{R}\opn{Hom}_{A}(A/\a,M) \cong \mrm{R}\opn{Hom}_{\widehat{A}}(A/\a,\mrm{L}\widehat{\Lambda}_{\a}(M)) \in \cat{D}_{\mrm{f}}(A/\a)
\]
proving one direction of the theorem.

Conversely, suppose that 
\[
\mrm{R}\opn{Hom}_A(A/\a,M) \in \cat{D}_{\mrm{f}}(A/\a).
\]
Let $\widehat{\a}:= \widehat{A}\cdot \a$.
By the above isomorphism, we have that
\[
\mrm{R}\opn{Hom}_{\widehat{A}}(A/\a,\mrm{R}\widehat{\Gamma}_{\a}(M)) \in \cat{D}_{\mrm{f}}(A/\a).
\]
Since $\a$ is weakly proregular, by \cite[Corollary 4.28]{PSY1}, the functor $\mrm{R}\Gamma_{\a}$ has finite cohomological dimension. Hence, $\mrm{R}\widehat{\Gamma}_{\a}$ also has finite cohomological dimension, so that $\mrm{R}\widehat{\Gamma}_{\a}(M)$ is bounded.
The complex $\mrm{R}\widehat{\Gamma}_{\a}(M)$ has $\a$-torsion cohomology, 
so by \cite[Corollary 4.32]{PSY1} we conclude that
\[
\mrm{R}\widehat{\Gamma}_{\a}(M) \in \cat{D}^{\mrm{b}}(\widehat{A})_{\a\opn{-tor}}.
\]
Hence, all the conditions of \cite[Theorem 3.10]{PSY2} are satisfied for the noetherian ring $\widehat{A}$ and the bounded complex $\mrm{R}\widehat{\Gamma}_{\a}(M)$, 
so we deduce that there exists some $N \in \cat{D}^{\mrm{b}}_{\mrm{f}}(\widehat{A})$ such that 
\begin{equation}\label{eqn:using-cofinite}
\mrm{R}\Gamma_{\widehat{\a}}(N) \cong \mrm{R}\widehat{\Gamma}_{\a}(M).
\end{equation}
By Theorem \ref{thm:LLambdaRGamma}, we have that
\[
\mrm{L}\widehat{\Lambda}_{\a}(M) \cong \mrm{L}\Lambda_{\widehat{\a}} \mrm{R}\widehat{\Gamma}_{\a} (M),
\]
so using (\ref{eqn:using-cofinite}), we see that
\[
\mrm{L}\widehat{\Lambda}_{\a}(M) \cong \mrm{L}\Lambda_{\widehat{\a}} \mrm{R}\Gamma_{\widehat{\a}}(N).
\]
But by \cite[Lemma 7.2]{PSY1} we have that
\[
\mrm{L}\Lambda_{\widehat{\a}} \mrm{R}\Gamma_{\widehat{\a}}(N) \cong \mrm{L}\Lambda_{\widehat{\a}}(N),
\]
and since $N \in \cat{D}^{\mrm{b}}_{\mrm{f}}(\widehat{A})$,
and $\widehat{A}$ is noetherian and $\widehat{\a}$-adically complete,
by \cite[Proposition 3.1]{PSY2}, 
\[
\mrm{L}\Lambda_{\widehat{\a}}(N) \cong N,
\]
so that
\[
\mrm{L}\widehat{\Lambda}_{\a}(M) \cong N \in \cat{D}^{\mrm{b}}_{\mrm{f}}(\widehat{A}).
\]
\end{proof}

\textbf{Acknowledgments.}
The author would like to thank Srikanth Iyengar, Sean Sather-Wagstaff and Amnon Yekutieli for some useful discussions.
The author is grateful to the anonymous referee for many helpful comments and suggestions that helped improving this manuscript.

\end{document}